\documentclass[11pt,a4paper]{article}

\usepackage[numbers,sort&compress]{natbib}
\usepackage{amsmath,amssymb,amsthm}
\usepackage[numbers,sort&compress]{natbib}
\usepackage{graphicx}
\usepackage{hyperref} 

\newcommand{\vol}{\operatorname{vol}}

\newcommand{\diverg}{\operatorname{div}}

\newcommand{\dist}{\operatorname{dist}}

\newcommand{\sgn}{\operatorname{sgn}}

\newtheorem{definition}{Definition}

\newtheorem{theorem}{Theorem}
\newtheorem{remark}{Remark}
\newtheorem{example}{Example}

\begin{document}

\title{\textbf{Weighted phase volume stability: dissipativity and geometric interpretation}}

\author{Igor Furtat\thanks{IPME RAS, Boljshoy prospekt V.O., d. 61,
Saint Petersburg, 199178, Russia. Email: cainenash@mail.ru.
ORCID: 0000-0003-4679-5884}}

\date{}

\maketitle

\begin{abstract}
The evolution of weighted phase volume under a fixed dynamical system is investigated using a positive weight raised to an arbitrary real exponent. Sufficient conditions for uniform exponential contraction and expansion of transported weighted volume are obtained in terms of the corresponding weighted divergence. These conditions make it possible to reveal dissipative properties that may remain undetected by the ordinary divergence. Consequences for invariant sets are established, and, under the assumption of a compact absorbing set, the weighted measure of a global attractor is characterized.
The approach is extended to invariant submanifolds through a distance-weighted ambient-volume estimate. A differential-geometric interpretation is also provided, in which the weighted divergence is identified with the ordinary divergence associated with a conformally transformed metric. The dependence on the exponent is used to reveal a duality between mutually inverse weights. In addition, sufficient local conditions for exponential attraction to an invariant hypersurface are derived using a sign-changing weight. Weighted-volume decay is interpreted as an integral property of transported sets and, by itself, is not identified with pointwise Lyapunov stability.
\end{abstract}

\noindent\textbf{Keywords:} weighted phase volume, weighted divergence,
stability of dynamical systems, dissipativity, conformal metric, duality

\section{Introduction}

The classical theory of motion stability, founded by A.\,M.\,Lyapunov \cite{Lyapunov}, studies the behavior of individual trajectories and relies on the existence of a Lyapunov function that decreases along solutions. An alternative approach, originating from the results of Liouville \cite{Liouville} and developed in ergodic theory \cite{Sinai,KatokHasselblatt}, considers the evolution of the phase volume (Lebesgue measure) of regions in the phase space. The sign of the divergence of a vector field determines the local contraction or expansion of the ordinary phase volume, and the condition $\diverg f\le 0$ for the system $\dot{x}=f(x)$ ensures nonincrease of transported volume. Existence of a compact global attractor requires additional absorbing and compactness hypotheses \cite{Temam,ChepyzhovVishik,Ladyzhenskaya}. 
Important contributions to the theory of dissipative systems were made in \cite{Levinson}, where the concept of dissipativity via absorbing sets was formulated, and in \cite{NemytskiiStepanov}, where the qualitative theory of differential equations was developed.
In parallel, integral Lyapunov functionals \cite{Krasovskii,Zubov} were developed, allowing stability to be assessed by the behavior of integral characteristics.

The idea of using divergence to analyze the stability of dynamical systems has a rich history, with contributions from many researchers. 
Early results \cite{Zaremba54,Fronteau65,Brauchli68} laid the foundations for studying stability through the sign of the divergence of the vector field, particularly in the context of autonomous systems and index theory. 
These pioneering studies were followed in \cite{Jukov78,Jukov79,Jukov90,Jukov99,Shestakov78}, which developed divergence conditions for stability and instability of nonlinear systems, including criteria based on the behavior of the divergence in invariant domains. 
In parallel, the monograph \cite{Krasnoselski63} provided a systematic treatment of vector fields on the plane, offering topological and analytical tools that complemented the divergence-based approach. 
In \cite{Rantzer00,Rantzer01}, an integral stability criterion dual to Lyapunov's theorem is proposed. Instead of requiring a pointwise decreasing function, Rantzer's approach uses the sign of the divergence to construct a density function that guarantees stability almost everywhere, thereby providing a powerful alternative for systems where classical Lyapunov functions are difficult to find. 
More recently, in \cite{Furtat20a,Furtat20,Furtat21,Furtat22,Furtat26}, the divergence approach was further developed for various classes of dynamical systems, including nonautonomous and perturbed ones. New necessary and sufficient conditions for stability, asymptotic stability, and exponential stability were formulated, together with ellipsoidal reachable set estimates.
The classical foundations of nonlinear systems theory and geometric methods, which underpin many of these developments, are presented in \cite{Arnold,Mishchenko,Khalil,Sontag}.

However, in many physical problems it is natural to consider the weighted phase volume $\int_D w(x)\,dV(x)$, where the positive weight $w$ specifies a reference measure (not necessarily an invariant one) or a conformal change of metric. Such constructions arise in statistical mechanics (canonical Gibbs measures), in Hamiltonian systems with friction is Nosé--Hoover thermostats \cite{Nose,Hoover,EvansMorriss}, and also in geometric analysis when studying flows conformally equivalent to gradient ones \cite{Perelman,Topping}. The weighted measure allows one to account for the nonuniformity of the contraction of the phase flow and often provides more precise information about the dissipative properties of the system.

Recently, a method based on weighted phase volumes was proposed in \cite{Furtat26_Arc}, where the notions of forward and backward weighted divergences were introduced, integral stability criteria were obtained, and ellipsoidal reachable sets were constructed. However, that paper considered only the special cases $\alpha=1$ and $\alpha=-1$, which correspond to the weight $w$ and its inverse, respectively. The present paper takes the next natural step: we introduce a one-parameter family of weighted volumes $\Omega_{w^\alpha}$ with weight $w^\alpha$, parameterized by an arbitrary real exponent $\alpha\in\mathbb{R}$. This recovers the classical case $\alpha=0$ and connects the weighted divergences for positive and negative exponents. These signs do not themselves imply contraction or expansion. The main tool becomes the \emph{weighted divergence of order $\alpha$}, which will be defined in the paper.

The approach provides integral estimates for transported sets near invariant sets (including limit cycles and submanifolds). Such estimates alone do not establish pointwise Lyapunov stability of individual trajectories. In particular, the weighted volume can serve as an integral Lyapunov functional. The choice of the parameter $\alpha$ and the function $w$ allows one to flexibly adjust its properties to the geometry of a particular flow. Moreover, the developed approaches are related to conformal geometry, i.e., the weighted divergence of order $\alpha$ turns out to be the ordinary divergence in a conformally transformed metric, and the associated rescalings form a one-parameter group for fixed $w$. This supplies a Riemannian interpretation of the volume identities.

The present paper is also connected to several recent developments in the theory of almost global convergence and dissipativity. In \cite{Kamaldar24}, Lyapunov-like functions are used for almost global convergence in discrete-time systems. The present volumetric estimates address transported measures, a different conclusion from almost-global pointwise convergence. 
The weighting in multiobjective strict dissipativity in \cite{Grune22} serves a different purpose. Here $w^\alpha$ changes the reference phase-volume measure, with affine dependence of its divergence on $\alpha$. 
Contraction analysis of functional differential equations with switching is studied in \cite{Tran24}. Our setting instead concerns the evolution of weighted ambient volumes of sets under finite-dimensional flows. 
The paper \cite{Lorenzetti25} investigates almost global attractivity via semigroup theory. That work concerns almost-global attraction, whereas our principal conclusions are set-wise volume estimates rather than pointwise attraction.
Thus, the presented framework unifies these directions by introducing a one-parameter family of weighted volumes and weighted divergences, providing explicit exponential weighted-volume estimates and a coherent geometric interpretation.

The main novelty of this paper is as follows:
\begin{itemize}
\item[(i)] A one-parameter family of weighted volumes with weight $w^\alpha$, $\alpha\in\mathbb{R}$, and the corresponding divergences $D_{w^\alpha}f$ is considered, which allows one to test both contraction and expansion for different parameter choices. The sign of $\alpha$ alone does not determine either behavior. The cases $\alpha=1$ and $\alpha=-1$, studied in \cite{Furtat26_Arc}, become particular instances.
\item[(ii)] Sufficient conditions for $w^\alpha$-volumetric dissipativity and contractivity are given, including exponential estimates, absence of invariant sets of positive measure, and a description of the global attractor for any $\alpha$. In contrast to classical divergence criteria \cite{Jukov78,Jukov90,Rantzer01}, the proposed conditions do not require sign-definiteness of the ordinary divergence and are applicable to a wider class of systems.
\item[(iii)] Generalizations to the case of attraction to a submanifold $N$ of arbitrary codimension are obtained, and sufficient conditions for exponential $N$-$w^\alpha$-contraction are derived. This quantifies the evolution of ambient volume weighted by distance from the submanifold. Pointwise transverse stability requires additional hypotheses.
\item[(iv)] The geometric nature of the weighted divergence as the ordinary divergence in a conformally transformed metric is investigated, which realizes the family as a one-parameter family of conformal rescalings of the fixed background metric.
\item[(v)] The duality $\alpha\leftrightarrow -\alpha$ is shown and the symmetry between contraction and expansion of volumes is investigated, as an algebraic identity for weighted divergences.
\item[(vi)] An oriented volumetric stability for sign-alternating weights arising in the consideration of invariant hypersurfaces is developed. Sufficient conditions for asymptotic stability are obtained, generalizing classical results on dissipativity.
\end{itemize}

The paper is organized as follows. Section~\ref{Sec:PrSt} formulates the problem statement and introduces the main definitions. Section~\ref{sec:general} contains results on $w^\alpha$-volumetric stability: uniform rates of contraction and expansion, invariant sets of positive measure, a set-functional estimate, existence of a global attractor, and the nonautonomous case. In Section~\ref{sec:wsubmanifold_alpha}, the developed approaches are extended to the case of attraction to a submanifold with weight $w^\alpha$. Section~\ref{sec:wforms} is devoted to the differential-geometric interpretation and conformal equivalence for arbitrary $\alpha$. Section~\ref{sec:orient} treats local signed-weight stability near an invariant hypersurface. The algebraic duality $\alpha\leftrightarrow-\alpha$ is discussed in Section~\ref{sec:wforms}. 

Throughout this paper, we use the following \textit{notation}:
$\mathbb{R}$ denotes the set of real numbers;
$\mathbb{N}$ denotes the set of natural numbers;
$\varnothing$ denotes the empty set;
for a set $A\subset M$, $\overline{A}$ denotes its closure in $M$;
for a set $A\subset M$, $\operatorname{int}(A)$ denotes its interior;
$\operatorname{vol}(A)$ denotes the Riemannian volume of a measurable set $A\subset M$;
$\varphi_t\colon M\to M$ denotes the flow generated by the vector field $f$;
$\diverg f$ denotes the classical Riemannian divergence of the vector field $f$;
$\nabla$ denotes the gradient operator on $M$;
$\mathcal{L}_f$ denotes the Lie derivative along $f$ (for a scalar function $h$, $\mathcal{L}_f h=dh(f)=\langle\nabla_g h,f\rangle_g$); throughout, the notation $f^\top\nabla h$ is shorthand for $dh(f)$ even in non-Cartesian coordinates;
$\wedge$ denotes the exterior (wedge) product of differential forms.

\section{Problem statement}
\label{Sec:PrSt}

Consider an autonomous dynamical system of the form
\begin{equation}
\label{eq:sys}
\dot{x}=f(x),~~ x\in M,
\end{equation}
where $M$ is an $n$-dimensional smooth oriented Riemannian manifold with volume form $dV$, and $f$ is a smooth vector field on $M$ generating a forward-complete flow $\varphi_t\colon M\to M$ for $t\ge0$.

Let a positive function $w\in C^1(M)$, $w>0$, and a real parameter $\alpha\in\mathbb{R}$ be given. For a relatively compact measurable set $D\subset M$ (i.e. with compact closure in $M$), we define the \emph{weighted phase volume of order $\alpha$} \cite{Furtat26_Arc} as
\begin{equation}
\label{eq:Vwalpha_ps}
\Omega_{w^\alpha}(D)=\int_D w(x)^\alpha\, dV(x).
\end{equation}
The evolution of the set under the flow is given by $D(t)=\varphi_t(D)$. For every such measurable set (not necessarily a domain with smooth boundary), the change-of-variables formula for the flow and differentiation under the integral on compact flow tubes give
\begin{equation}
\begin{array}{lll}
\label{eq:deriv_alpha}
\frac{d}{dt}\Omega_{w^\alpha}(D(t)) = \int_{D(t)} D_{w^\alpha}f(x)\, w(x)^\alpha\, dV(x),
\end{array}
\end{equation}
where we have introduced the \emph{weighted divergence of order $\alpha$} \cite{Furtat26_Arc} defined by
\begin{equation}
\label{eq:Dwalpha}
D_{w^\alpha}f(x) := \diverg f(x) + \alpha \mathcal{L}_f\ln w(x).
\end{equation}

\begin{remark}
For $\alpha=0$, the weighted divergence reduces to the classical divergence $\diverg f$ \cite{Arnold}. 
Hence, classical results on volumetric dissipativity and contractivity based on the sign of $\diverg f$ are recovered as special cases of the present framework.
\end{remark}

The goal of this paper is to obtain conditions, expressed in terms of the weighted divergence $D_{w^\alpha}f$, under which the weighted volume $\Omega_{w^\alpha}(D(t))$ is monotonically decreasing (dissipativity) or exponentially decreasing (contractivity), and to investigate the resulting properties of the dynamical system. In particular, we study for which $\alpha$ and $w$ the system is $w^\alpha$-volumetrically dissipative, when exponential contraction or expansion of the volume occurs with explicit estimates, what invariant sets may exist in dissipative systems with $D_{w^\alpha}f\le0$, how global attractors are structured in $w^\alpha$-contracting systems and what their $w^\alpha$-measure is, how weighted-volume estimates depend on the parameter $\alpha$ and whether there is a duality between $\alpha$ and $-\alpha$, what the geometric meaning of $D_{w^\alpha}f$ is, and whether the developed approach can be extended to systems with attraction to a submanifold and to the case of a sign-alternating weight generated by an invariant hypersurface.

From the perspective of stability theory, the developed approach provides a method for constructing integral Lyapunov functionals in the form of the weighted volume $\Omega_{w^\alpha}$, whose monotonicity or exponential decay controls an integral of transported domains. Convergence of individual trajectories to an attractor requires separate arguments.

\section{$w^\alpha$-volumetric stability}
\label{sec:general}

In this section, the one-parameter family $\Omega_{w^\alpha}$ includes ordinary phase volume ($\alpha=0$) and the measures corresponding to positive and negative exponents. The sign of $\alpha$ alone does not fix the sign of the weighted divergence. The main focus is on how the choice of the parameter $\alpha$ and the function $w$ leads to explicit exponential estimates of contraction or expansion of the volume, thus providing a quantitative measure of dissipation. Moreover, the role of the weighted volume itself as an integral Lyapunov functional, adjustable to the geometry of the flow, is discussed. In other words, we show that by choosing a suitable ``weight'' (density of the measure) and the parameter $\alpha$, one can modify the way the phase volume is measured so that its value decreases monotonically along the flow, even when the ordinary (uniform) phase volume does not possess this property. This approach provides estimates for transported domains near invariant sets. It does not alone prove pointwise stability.

Throughout this section, unless otherwise stated, domains have compact closure in $M$ and the flow is forward-complete. The transport formula holds for measurable such domains. Jordan measurability is invoked only where needed. We first introduce the definitions used in this section.

\begin{definition}
\label{Def:def1}
System \eqref{eq:sys} is called $w^\alpha$-volumetrically conservative if for any measurable domain $D \Subset M$ and all $t\ge0$ one has
$\Omega_{w^\alpha}(\varphi_t(D))=\Omega_{w^\alpha}(D)$ (for continuous $D_{w^\alpha}f$ this is equivalent to $D_{w^\alpha}f\equiv 0$ on $M$).
\end{definition}

\begin{definition}
\label{Def:def2}
System \eqref{eq:sys} is $w^\alpha$-volumetrically dissipative if for any measurable domain $D\Subset M$ and all $t\ge0$ one has
$\Omega_{w^\alpha}(\varphi_t(D))\le \Omega_{w^\alpha}(D)$ 
(for continuous $D_{w^\alpha}f$ this is equivalent to the inequality $D_{w^\alpha}f(x)\le 0$ for all $x\in M$).
If the condition is restricted to domains within an open positively invariant region $U$, we speak of regional $w^\alpha$-volumetric dissipativity.
\end{definition}

\begin{definition}
\label{Def:def3}
System \eqref{eq:sys} is called asymptotically $w^\alpha$-volumetrically contracting if, for every measurable $D\Subset M$,
$\lim_{t\to\infty}\Omega_{w^\alpha}(\varphi_t(D))=0$.
\end{definition}

Suppose $U$ is a positively invariant region on which $0<c\le w^\alpha(x)\le C<\infty$. For every measurable $D\Subset U$ and all $t\ge0$, the entire transported set $\varphi_t(D)$ remains in $U$, and therefore $c\,\vol(\varphi_t(D))\le\Omega_{w^\alpha}(\varphi_t(D))\le C\,\vol(\varphi_t(D))$. Under these hypotheses the weighted and ordinary volumes tend to zero simultaneously. Their instantaneous monotonicity in time need not be equivalent. Different exponents may change the contraction or expansion of the measured volume, but do not change the underlying trajectories. The choice of $w$ and $\alpha$ can be interpreted as a transition to a new Riemannian metric $\tilde g_\alpha = w^{2\alpha/n} g$, conformally equivalent to the original one, in which the ordinary divergence of the field $f$ equals $D_{w^\alpha}f$ (see Section~\ref{sec:wforms}). Thus, $w^\alpha$-dissipativity is equivalent to ordinary dissipativity in the conformally transformed geometry with metric $\tilde g_\alpha$.

We now state the result on the uniform rate of $w^\alpha$-contraction.

\begin{theorem}
\label{th:wrate_alpha_comp}
Let $U\subseteq M$ be an open positively invariant domain, $\lambda>0$, and suppose
$D_{w^\alpha}f(x) \le -\lambda$ for all $x\in U$.
Then for every measurable $D\Subset U$ the estimate
\begin{equation}
\label{ineq:dissipat}
\Omega_{w^\alpha}(\varphi_t(D)) \le e^{-\lambda t}\,\Omega_{w^\alpha}(D),\qquad t\ge0,
\end{equation}
holds. If $U=M$, then the system is asymptotically $w^\alpha$-contracting at the stated volume rate for all $D\Subset M$.
\end{theorem}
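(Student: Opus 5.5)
The argument is a Grönwall estimate built on the transport identity \eqref{eq:deriv_alpha}. Fix a measurable $D\Subset U$ and set $y(t)=\Omega_{w^\alpha}(\varphi_t(D))$. First I check that $y$ is finite and differentiable. Positive invariance of $U$ gives $\varphi_t(D)\subset U$ for all $t\ge0$. On any finite interval $[0,T]$ the set $\varphi_{[0,T]}(\overline D)$ is compact, being the continuous image of $[0,T]\times\overline D$, and $w^\alpha$ is continuous and positive. Hence $y(t)<\infty$, and \eqref{eq:deriv_alpha} applies on $[0,T]$. One caveat: $\overline D$ need not lie in $U$ if $D$ merely has compact closure in $M$. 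I will read $D\Subset U$ as $\overline D\subset U$, which is the meaning of the notation. Then the compact tube lies in $U$, and the hypothesis $D_{w^\alpha}f\le-\lambda$ holds on $\varphi_t(D)$.

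Second, I insert the bound into the integrand. Since $w^\alpha>0$,
\[
y'(t)=\int_{\varphi_t(D)} D_{w^\alpha}f\,w^\alpha\,dV\le -\lambda\int_{\varphi_t(D)} w^\alpha\,dV=-\lambda\,y(t).
\]
So $\frac{d}{dt}\bigl(e^{\lambda t}y(t)\bigr)\le0$. Integrating from $0$ to $t$ gives $y(t)\le e^{-\lambda t}y(0)$, which is \eqref{ineq:dissipat}.

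For the main obstacle, the only delicate point is justifying differentiability of $y$ for arbitrary measurable $D$. The paper asserts \eqref{eq:deriv_alpha} for such sets, so I would invoke it. If more care is wanted, I would avoid differentiation entirely and work with the change of variables $y(t)=\int_D w(\varphi_t x)^\alpha J_t(x)\,dV(x)$, where $J_t$ is the Jacobian determinant of $\varphi_t$. Liouville's formula gives $\partial_t J_t=(\diverg f)(\varphi_t x)J_t$. Combined with $\partial_t w^\alpha(\varphi_t x)=\alpha w^\alpha\mathcal L_f\ln w$ at $\varphi_t x$, this yields
\[
\partial_t\bigl(w^\alpha(\varphi_t x)J_t(x)\bigr)=D_{w^\alpha}f(\varphi_t x)\,w^\alpha(\varphi_t x)J_t(x)\le -\lambda\, w^\alpha(\varphi_t x)J_t(x).
\]
This is a pointwise Grönwall inequality, so $w^\alpha(\varphi_t x)J_t(x)\le e^{-\lambda t}w^\alpha(x)$. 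Integrating over $D$ gives the estimate with no regularity assumptions on $D$.

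Finally, for $U=M$, positive invariance is automatic because the flow is forward-complete. The estimate then holds for every $D\Subset M$, and $y(t)\to0$ as $t\to\infty$, which is asymptotic $w^\alpha$-contraction in the sense of Definition~\ref{Def:def3}, at rate $\lambda$.
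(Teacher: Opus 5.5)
Your proposal is correct and follows the paper's proof. Both insert $D_{w^\alpha}f\le-\lambda$, which holds on $\varphi_t(D)\subset U$ by positive invariance, into the transport identity \eqref{eq:deriv_alpha} to get $\dot V\le-\lambda V$, close with the integrating factor $e^{\lambda t}$, and let $t\to\infty$ when $U=M$. Your optional pointwise variant via Liouville's formula for $J_t$ is a sound extra the paper does not give: it avoids differentiating under the integral and needs only $D\subset U$ rather than $\overline D\subset U$.
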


\begin{remark}
Differently from the notion of dissipativity in the sense of Levinson \cite{Levinson}, which only requires the existence of absorbing sets, the proposed approach provides a quantitative measure of dissipation. 
Indeed, if the condition $D_{w^\alpha}f(x) \le -\lambda < 0$ holds for all $x$ in a positively invariant domain $U\subset M$, then by Theorem~\ref{th:wrate_alpha_comp} the weighted volume of any initial domain $D\subset U$ satisfies the explicit exponential estimate \eqref{ineq:dissipat}. 
This is a quantitative estimate for phase volume, distinct from the existence of an absorbing set. It does not by itself estimate convergence times of individual trajectories.
\end{remark}

\begin{proof}
Consider an initial measurable domain $D\Subset U$. The flow $\varphi_t$ is forward-complete. Define the function of time
\begin{equation}
\label{eq:V_D}
V(t): = \Omega_{w^\alpha}(D(t)) = \int_{D(t)} w(x)^\alpha \, dV(x),
\end{equation}
where $D(t) = \varphi_t(D)$. At $t=0$, the value of $V(0)$ is finite since $\overline D$ is compact and $w^\alpha$ is continuous and positive there, where here and throughout $\overline{D}$ denotes the closure of the domain $D$.
Using \eqref{eq:Vwalpha_ps}--\eqref{eq:Dwalpha} and the assumptions of the theorem, there exists a constant $\lambda > 0$ such that
$D_{w^\alpha}f(x) \le -\lambda$ for all $x \in U$.
The domain $U$ is positively invariant, i.e. $D(t) \subset U$ for all $t \ge 0$. Hence, for every point $x \in D(t)$ we have $D_{w^\alpha}f(x) \le -\lambda$. Multiplying this inequality by the positive function $w^\alpha$, we obtain the pointwise estimate of the integrand:
\begin{equation*}
D_{w^\alpha}f(x) \, w(x)^\alpha \le -\lambda \, w(x)^\alpha, ~~ x \in D(t) .
\end{equation*}

Integrating this inequality over $D(t)$ and using \eqref{eq:Vwalpha_ps}, \eqref{eq:deriv_alpha} and \eqref{eq:V_D}, we obtain
$\dot V(t) \le -\lambda \int_{D(t)} w^\alpha \, dV = -\lambda V(t)$.
Thus, the function $V(t)$ satisfies the differential inequality
$\dot V(t) + \lambda V(t) \le 0$, $t \ge 0$.
Multiplying by the integrating factor $e^{\lambda t}$, we get
$\frac{d}{dt}\bigl( V(t) e^{\lambda t} \bigr)
= e^{\lambda t} \bigl( \dot V(t) + \lambda V(t) \bigr) \le 0$.
Consequently, the product $V(t) e^{\lambda t}$ is nonincreasing in $t$. In particular, for any $t \ge 0$ we have
$V(t) e^{\lambda t} \le V(0) e^{0} = V(0)$.
Dividing by $e^{\lambda t} > 0$, we arrive at the required exponential estimate
\begin{equation*}
\Omega_{w^\alpha}(\varphi_t(D)) = V(t) \le V(0) e^{-\lambda t} = \Omega_{w^\alpha}(D) e^{-\lambda t}.
\end{equation*}

If $U=M$, the same estimate holds for each $D\Subset M$ and all $t\ge0$. Passing to the limit gives
$\lim_{t\to\infty} \Omega_{w^\alpha}(\varphi_t(D)) = 0$.
This means that the system is asymptotically $w^\alpha$-contracting. Theorem \ref{th:wrate_alpha_comp} is proved.
\end{proof}

We now state the result on the uniform rate of $w^\alpha$-expansion.

\begin{theorem}
\label{th:wrate_alpha_exp}
If in an open positively invariant domain $U$ we have $D_{w^\alpha}f(x) \ge \mu$ for some $\mu>0$, then for every measurable $D\Subset U$ one gets 
$\Omega_{w^\alpha}(\varphi_t(D)) \ge e^{\mu t}\,\Omega_{w^\alpha}(D)$, $t\ge0$.
\end{theorem}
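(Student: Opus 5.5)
The plan mirrors the proof of Theorem~\ref{th:wrate_alpha_comp} with the inequality reversed. Fix a measurable $D\Subset U$. Set $V(t)=\Omega_{w^\alpha}(D(t))$ with $D(t)=\varphi_t(D)$, as in \eqref{eq:V_D}. Since $\overline D$ is compact and $w^\alpha$ is continuous and positive, $V(0)$ is finite and nonnegative. For each fixed $t$, $\overline{D(t)}=\varphi_t(\overline D)$ is compact, so $V(t)$ is finite and the transport formula \eqref{eq:deriv_alpha} applies on compact flow tubes.

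By positive invariance of $U$ we have $D(t)\subset U$ for all $t\ge0$. Hence $D_{w^\alpha}f(x)\ge\mu$ for every $x\in D(t)$. Multiplying by $w(x)^\alpha>0$ and integrating over $D(t)$, the formula \eqref{eq:deriv_alpha} gives
\begin{equation*}
\dot V(t)=\int_{D(t)} D_{w^\alpha}f\,w^\alpha\,dV\ \ge\ \mu\int_{D(t)} w^\alpha\,dV=\mu V(t).
\end{equation*}
With the integrating factor $e^{-\mu t}$ this becomes $\frac{d}{dt}\bigl(V(t)e^{-\mu t}\bigr)=e^{-\mu t}\bigl(\dot V(t)-\mu V(t)\bigr)\ge0$. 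So $V(t)e^{-\mu t}$ is nondecreasing, which yields $V(t)\ge e^{\mu t}V(0)$ for all $t\ge0$. This is the claimed estimate.

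The main point needing care is the same as in Theorem~\ref{th:wrate_alpha_comp}: justifying differentiability of $V$ and the identity \eqref{eq:deriv_alpha} for a general measurable $D$. I would handle it by the change of variables $V(t)=\int_D w(\varphi_t(y))^\alpha J_t(y)\,dV(y)$, where $J_t$ is the Jacobian of $\varphi_t$. One then differentiates under the integral on the compact set $\overline D\times[0,T]$, where all quantities are continuous; this is the fact the paper already takes as given.

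There is one subtlety specific to expansion. The set $D(t)$ must stay relatively compact so that $V(t)$ stays finite. This holds for every finite $t$, by forward completeness and continuity of $\varphi_t$, even though $V(t)$ may grow without bound as $t\to\infty$. If one prefers to avoid differentiating, an alternative is to integrate the Liouville-type identity $\frac{d}{dt}\bigl(w(\varphi_t(y))^\alpha J_t(y)\bigr)=D_{w^\alpha}f(\varphi_t(y))\,w(\varphi_t(y))^\alpha J_t(y)$ pointwise in $y$. Then Grönwall gives $w(\varphi_t(y))^\alpha J_t(y)\ge e^{\mu t}w(y)^\alpha$ for each $y$, and integrating over $D$ gives the result directly.
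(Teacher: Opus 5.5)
Your proposal is correct and matches the paper's argument: the transport formula \eqref{eq:deriv_alpha} with $D_{w^\alpha}f\ge\mu$ on the positively invariant $U$ gives $\dot V\ge\mu V$, and the integrating factor $e^{-\mu t}$ yields the lower bound. Your pointwise Liouville/Gr\"onwall variant is just a more careful justification of the same estimate, not a different route.
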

\begin{proof}
Analogous to the proof of Theorem \ref{th:wrate_alpha_comp}, using \eqref{eq:deriv_alpha} and $D_{w^\alpha}f\ge \mu$ we obtain $\dot V(t) \ge \mu V(t)$, which yields the lower estimate.
\end{proof}

\begin{example}
\label{ex:3.4_3.5}
Consider the forward-complete system on $M=\mathbb R^2\setminus\{0\}$
\begin{equation}\label{ex:sys_first}
\dot r=r(1-r),\qquad \dot\theta=1,
\end{equation}
which has an attracting limit cycle at $r=1$. The Euclidean divergence in the polar coordinate basis is
$\operatorname{div}f=r^{-1}\partial_r(r\dot r)+\partial_\theta\dot\theta=2-3r$. It is positive for $r<2/3$ and negative for $r>2/3$. Thus its sign varies on the annulus $1/2\le r\le3/2$, although it is strictly negative in a sufficiently small neighbourhood of the cycle.

Choose $w(r)=e^{16(r-1)^2}$ and $\alpha=1$. Then
$D_wf=2-3r-32r(r-1)^2$.

The compact annulus $K=\{1/2\le r\le3/2\}$ is positively invariant and contained in the positively invariant open annulus $U=\{0.49<r<1.51\}$. On $U$ and hence on $K$ the maximum of $D_wf$ is approximately $-0.93$, reached at $r\approx0.95$ (the interior critical point of $D_wf$). In particular $D_wf\le-1/2$ on $U$. Hence Theorem~\ref{th:wrate_alpha_comp} yields
$\Omega_w(\varphi_t(D))\le e^{-t/2}\Omega_w(D)$ for every measurable $D\subset K$.
For $\alpha=-1$, the expression $D_{w^{-1}}f=2-3r+32r(r-1)^2$ changes sign on $K$, so neither uniform-sign theorem applies there. The example concerns weighted volume. The pointwise attraction of the cycle follows separately from the radial equation.
\end{example}

We now formulate a result on the absence of $w^\alpha$-invariant sets of positive measure.

\begin{theorem}
\label{th:wwandering_alpha}
Assume that the system is $w^\alpha$-volumetrically dissipative ($D_{w^\alpha}f\le0$ almost everywhere) and, moreover, the set of points where $D_{w^\alpha}f(x)<0$ is dense in $M$.
Then there exist no relatively compact Jordan-measurable invariant (i.e. $\varphi_t(A)=A$ for all $t$) sets $A$ with $\Omega_{w^\alpha}(A)>0$.
\end{theorem}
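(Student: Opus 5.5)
Suppose, for contradiction, that $A$ is a relatively compact Jordan-measurable set with $\varphi_t(A)=A$ for all $t\ge0$ and $\Omega_{w^\alpha}(A)>0$. The plan is to show that the weighted volume of $A$ must strictly decrease, which contradicts invariance. Set $V(t)=\Omega_{w^\alpha}(\varphi_t(A))$. Invariance gives $V(t)\equiv\Omega_{w^\alpha}(A)$, so $\dot V\equiv0$. On the other hand, the transport formula \eqref{eq:deriv_alpha} at $t=0$ gives
\begin{equation*}
0=\dot V(0)=\int_A D_{w^\alpha}f\,w^\alpha\,dV .
\end{equation*}
Since $D_{w^\alpha}f\le0$ and $w^\alpha>0$, the integrand is nonpositive. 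Hence $D_{w^\alpha}f\,w^\alpha=0$ almost everywhere on $A$, and therefore $D_{w^\alpha}f=0$ almost everywhere on $A$.

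The next step uses Jordan measurability to produce an open set on which this fails. Because $A$ is Jordan measurable, its boundary $\partial A$ has zero volume. Since $\Omega_{w^\alpha}(A)>0$ and $w^\alpha$ is bounded on the compact set $\overline A$, we have $\vol(A)>0$. Then $\vol(\operatorname{int}A)=\vol(A)-\vol(A\cap\partial A)>0$, so $\operatorname{int}A$ is a nonempty open set. By the density hypothesis it contains a point $x_0$ with $D_{w^\alpha}f(x_0)<0$.

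To turn this single point into a set of positive measure, I will use continuity of $D_{w^\alpha}f$. This holds because $f$ is smooth and $w\in C^1$, so $\diverg f+\alpha\,d(\ln w)(f)$ is continuous. Hence there is a ball $B\subset\operatorname{int}A$ around $x_0$ on which $D_{w^\alpha}f\le D_{w^\alpha}f(x_0)/2<0$. Then
\begin{equation*}
\int_A D_{w^\alpha}f\,w^\alpha\,dV\le\int_B D_{w^\alpha}f\,w^\alpha\,dV<0,
\end{equation*}
contradicting $\dot V(0)=0$. Therefore no such invariant set $A$ exists.

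I expect the main obstacle to be this density-to-positive-measure step. The density hypothesis alone is only pointwise, and it yields a set of positive measure only through continuity of $D_{w^\alpha}f$ together with the nonempty interior of $A$. This is exactly why the statement requires Jordan measurability: a merely measurable $A$ could have empty interior, for example a fat Cantor-type set. It could then avoid the open region where $D_{w^\alpha}f<0$ up to measure zero, and the argument would break down.

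A minor point is that \eqref{eq:deriv_alpha} is used at $t=0$, which is legitimate since $\overline A$ is compact. Alternatively, one can avoid differentiation by integrating \eqref{eq:deriv_alpha} over $[0,1]$:
\begin{equation*}
V(1)-V(0)=\int_0^1\int_{\varphi_s(A)}D_{w^\alpha}f\,w^\alpha\,dV\,ds .
\end{equation*}
Since $\varphi_s(A)=A$, the right-hand side equals $\int_A D_{w^\alpha}f\,w^\alpha\,dV$, which is strictly negative by the estimate above. This contradicts $V(1)=V(0)$.
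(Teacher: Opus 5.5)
Your proposal is correct and is essentially the paper's argument: the transport formula at $t=0$ gives $\int_A D_{w^\alpha}f\,w^\alpha\,dV=0$. The proof then closes using continuity of $D_{w^\alpha}f$, density of $\{D_{w^\alpha}f<0\}$, and the fact that a Jordan-measurable set satisfies $\vol(A)=\vol(\operatorname{int}A)$. The only difference is bookkeeping: the paper shows $\operatorname{int}A=\varnothing$ and hence $\Omega_{w^\alpha}(A)=0$ directly, whereas you argue by contradiction from $\Omega_{w^\alpha}(A)>0$ to a ball in $\operatorname{int}A$ on which the integrand is strictly negative.
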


\begin{remark}
The weighted volume $\Omega_{w^\alpha}(D)$ itself serves as a Lyapunov functional whose properties are completely determined by the sign of $D_{w^\alpha}f$. Thanks to the parameter $\alpha$, this functional can be tuned and adapted to the geometry of the flow without solving additional equations.
\end{remark}

\begin{proof}
Let $A\subset M$ be an arbitrary relatively compact, Jordan-measurable and flow-invariant set (i.e. $\varphi_t(A)=A$ for all $t\ge0$). We show that the assumptions of the theorem imply $\Omega_{w^\alpha}(A)=0$.

By invariance of $A$, the weighted volume $\Omega_{w^\alpha}(\varphi_t(A))$ is constant and equal to $\Omega_{w^\alpha}(A)$. Differentiating this equality at $t=0$ using \eqref{eq:deriv_alpha}, we get
\begin{equation*}
0 = \frac{d}{dt}\bigg|_{t=0} \Omega_{w^\alpha}(\varphi_t(A)) = \int_A D_{w^\alpha}f(x)\, w(x)^\alpha\, dV(x).
\end{equation*}
By the assumption of the theorem, $D_{w^\alpha}f(x)\le 0$ for almost all $x\in M$, and the weight $w^\alpha$ is strictly positive. Hence the integrand $D_{w^\alpha}f \cdot w^\alpha$ is almost everywhere non-positive, and its integral over $A$ is zero. This is possible only if $D_{w^\alpha}f(x)=0$ almost everywhere on $A$. By continuity of $D_{w^\alpha}f$, the equality holds on the open set $U:=\operatorname{int}(A)$: if it failed at an interior point, it would fail on a neighbourhood of positive measure within $A$.

On the other hand, the set $\{x\in M : D_{w^\alpha}f(x)<0\}$ is dense in $M$. If $U$ were nonempty, it would contain at least one point where $D_{w^\alpha}f<0$, contradicting the above. Therefore, $\operatorname{int}(A)=\varnothing$.

For a Jordan-measurable set, the boundary has Lebesgue measure zero, so its volume equals the volume of its interior: $\vol(A)=\vol(\operatorname{int}(A))=0$. Then
\begin{equation*}
\Omega_{w^\alpha}(A) = \int_A w(x)^\alpha\, dV(x)
\le \bigl(\sup_{x\in A} w(x)^\alpha\bigr) \vol(A) = 0.
\end{equation*}
Since $\Omega_{w^\alpha}(A)\ge0$, we get $\Omega_{w^\alpha}(A)=0$. 
Theorem \ref{th:wwandering_alpha} is proved.
\end{proof}

\begin{example}
\label{ex:th2_alpha}
Consider system \eqref{ex:sys_first} on $M=\{(r,\theta):r>1/4\}$ and the smooth positive weight $w(r)=e^{3r^2}/r^3$.
We have $\ln w = 3r^2-3\ln r$, $\nabla\ln w = (6r-3/r) \partial_r$, where $\partial_r$ is the unit radial vector in polar coordinates on the plane. 
Compute $f^{\top} \nabla\ln w = 3(2r^2-1)(1-r)$.
Then
$D_{w^\alpha}f = 2-3r + 3\alpha(2r^2-1)(1-r)$.
For $\alpha=1$ we get $D_w f = -1+6r^2(1-r)$. 
The maximum of $D_{w}f$ is $-1/9<0$ attained at $r=2/3$.
By Theorem~\ref{th:wwandering_alpha}, there exists no relatively compact Jordan-measurable invariant set of positive $w$-measure. For other $\alpha$, the sign of $D_{w^\alpha}f$ may change, which gives additional information about the system.
\end{example}

We now formulate a result on the weighted volumetric Lyapunov functional.

\begin{theorem}
\label{th:wLyap_alpha}
If the system is both $w^\alpha$-volumetrically dissipative and asymptotically $w^\alpha$-contracting, then the set functional
$F_{w^\alpha}(D)=\Omega_{w^\alpha}(D)$ has the following properties:
\begin{enumerate}
\item $F_{w^\alpha}(\varphi_t(D))$ is monotonically nonincreasing and tends to zero as $t\to\infty$;
\item the level sets $\{D: F_{w^\alpha}(D)\le c\}$ are invariant in the sense that from ${F}_{w^\alpha}(D)\le c$ it follows that $F_{w^\alpha}(\varphi_t(D))\le c$ for all $t\ge0$.
\end{enumerate}
\end{theorem}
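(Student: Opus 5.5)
The argument follows directly from the two hypotheses, together with the semigroup property $\varphi_{t+s}=\varphi_t\circ\varphi_s$. Let $D\Subset M$ be measurable and put $V(t)=F_{w^\alpha}(\varphi_t(D))=\Omega_{w^\alpha}(\varphi_t(D))$. Here $\varphi_t(D)$ is again a measurable set with compact closure: $\overline{\varphi_t(D)}\subset\varphi_t(\overline D)$, which is compact because $\varphi_t$ is continuous. So $V(t)$ is finite and the definitions apply to every transported set.

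For monotonicity in item 1, fix $0\le s\le t$. Apply Definition~\ref{Def:def2} to the set $D'=\varphi_s(D)\Subset M$ with time $t-s\ge 0$. This gives
\[
V(t)=\Omega_{w^\alpha}(\varphi_{t-s}(D'))\le\Omega_{w^\alpha}(D')=V(s).
\]
Hence $V$ is nonincreasing. We could instead use formula \eqref{eq:deriv_alpha} with $D_{w^\alpha}f\le0$ to get $\dot V\le0$. However, the semigroup route avoids any regularity issue and needs only the set-wise definition. The limit $V(t)\to0$ is exactly Definition~\ref{Def:def3} applied to $D$. Since $V$ is nonincreasing and bounded below by $0$, the convergence is monotone.

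Item 2 follows at once from item 1: if $F_{w^\alpha}(D)\le c$, then $F_{w^\alpha}(\varphi_t(D))\le F_{w^\alpha}(D)\le c$ for all $t\ge0$. For the same reason, the class $\{D\Subset M: F_{w^\alpha}(D)\le c\}$ is mapped into itself by every $\varphi_t$.

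The only point needing care is the preservation of relative compactness and measurability of $\varphi_s(D)$. Without it, the dissipativity definition could not be reapplied at intermediate times. Both properties follow from continuity of the flow map: $\varphi_s$ is a diffeomorphism onto its image, so it maps measurable sets to measurable sets. Everything else is a direct unpacking of the definitions.
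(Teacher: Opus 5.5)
Your proof is correct and takes the same route as the paper: both items follow directly from Definitions~\ref{Def:def2} and \ref{Def:def3}. Your semigroup step tightens the paper's argument. You reapply dissipativity to $\varphi_s(D)$ to obtain $V(t)\le V(s)$ for all $0\le s\le t$, whereas the paper's one-line appeal to the definition literally yields only $V(t)\le V(0)$.
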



\begin{proof}
By definition of $w^\alpha$-volumetric dissipativity, for any measurable domain $D\Subset M$ and all $t\ge0$ we have
$\Omega_{w^\alpha}(\varphi_t(D)) \le \Omega_{w^\alpha}(D)$.
This means that the functional $F_{w^\alpha}(D)=\Omega_{w^\alpha}(D)$ is monotonically nonincreasing along the flow.
By definition of asymptotic $w^\alpha$-contractivity,
$\lim_{t\to\infty}\Omega_{w^\alpha}(\varphi_t(D)) = 0$,
i.e. $F_{w^\alpha}(\varphi_t(D))\to 0$ as $t\to\infty$. The first property is proved.

Now suppose $F_{w^\alpha}(D)\le c$, i.e. $\Omega_{w^\alpha}(D)\le c$. By monotonic nonincrease,
$\Omega_{w^\alpha}(\varphi_t(D)) \le \Omega_{w^\alpha}(D) \le c$ for all $t\ge0$.
Hence $\varphi_t(D)$ remains in the level set $\{G : F_{w^\alpha}(G)\le c\}$, proving the second property.
This proves the stated set-functional properties.
\end{proof}

We now state a result on the existence of a global $w^\alpha$-attractor.

\begin{theorem}
\label{th:wattractor_alpha}
Suppose the system is asymptotically $w^\alpha$-contracting and has a compact positively invariant absorbing set $B$.
Thus $\varphi_t(B)\subseteq B$ for $t\ge0$, and each bounded set $D$ is mapped into $B$ after a sufficiently large time. Then
$\mathcal A=\bigcap_{t\ge0}\varphi_t(B)$
is a global attractor with $\Omega_{w^\alpha}(\mathcal A)=0$.
\end{theorem}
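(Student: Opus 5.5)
The plan has two halves. First, $\mathcal A$ is a compact invariant set that attracts bounded sets; this is the standard construction from absorbing sets in \cite{Temam,ChepyzhovVishik}. Second, $\mathcal A$ has zero weighted measure, which comes from asymptotic $w^\alpha$-contraction applied to the compact set $B$.

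\textbf{Step 1: nested compact sets.} Since $\varphi_s(B)\subseteq B$, the sets $\varphi_t(B)$ decrease in $t$: for $t_2\ge t_1$,
\[
\varphi_{t_2}(B)=\varphi_{t_1}(\varphi_{t_2-t_1}(B))\subseteq\varphi_{t_1}(B).
\]
Each $\varphi_t(B)$ is compact as the continuous image of a compact set. By Cantor's intersection theorem, $\mathcal A$ is nonempty and compact.

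\textbf{Step 2: invariance.} The inclusion $\varphi_s(\mathcal A)\subseteq\mathcal A$ is immediate from $\varphi_s(\bigcap_t\varphi_t(B))\subseteq\bigcap_t\varphi_{t+s}(B)=\mathcal A$. For the reverse inclusion, take $y\in\mathcal A$. For each $n$, write $y=\varphi_s(z_n)$ with $z_n\in\varphi_n(B)$. By compactness, $z_n$ has a convergent subsequence with limit $z$. Since the sets are nested and closed, $z\in\varphi_m(B)$ for every $m$, so $z\in\mathcal A$. Continuity of $\varphi_s$ then gives $\varphi_s(z)=y$.

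\textbf{Step 3: attraction.} Suppose some bounded $D$ is not attracted to $\mathcal A$. Then there are $\varepsilon>0$, times $t_k\to\infty$ and points $x_k\in D$ with $\dist(\varphi_{t_k}(x_k),\mathcal A)\ge\varepsilon$. After the absorption time $T_D$, we have $\varphi_{t_k}(x_k)=\varphi_{t_k-T_D}(\varphi_{T_D}(x_k))\in\varphi_{t_k-T_D}(B)$, which lies in the compact set $B$. Extract a convergent subsequence with limit $y$. For every fixed $s$, we eventually have $t_k-T_D\ge s$, so $y$ lies in the closed set $\varphi_s(B)$; hence $y\in\mathcal A$. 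This contradicts the $\varepsilon$-separation.

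\textbf{Step 4: zero measure.} I expect this to be the main obstacle, because $\mathcal A$ is generally not Jordan measurable, so Theorem~\ref{th:wwandering_alpha} cannot be quoted. The argument instead uses monotonicity of measures. Since $\mathcal A\subseteq\varphi_t(B)$ for every $t\ge0$,
\[
\Omega_{w^\alpha}(\mathcal A)\le\Omega_{w^\alpha}(\varphi_t(B)).
\]
The set $B$ is compact, hence measurable and relatively compact, so $B\Subset M$ and Definition~\ref{Def:def3} applies to it. Thus $\Omega_{w^\alpha}(\varphi_t(B))\to0$ as $t\to\infty$. Letting $t\to\infty$ gives $\Omega_{w^\alpha}(\mathcal A)=0$. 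If one prefers a direct argument, continuity from above of the finite measure $w^\alpha\,dV$ restricted to $B$, applied along the decreasing family $\varphi_t(B)$, gives the same result.

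The remaining care points are the following. Compactness of $\varphi_t(B)$ uses only continuity of $\varphi_t$ and forward completeness. The absorbing hypothesis must be read as: each bounded $D$ satisfies $\varphi_t(D)\subseteq B$ for all $t\ge T_D$. This follows from entering $B$ at one time together with positive invariance of $B$.
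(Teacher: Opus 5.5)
Your proposal is correct and follows essentially the same route as the paper. Both use the nested compact sets $\varphi_t(B)$ and Cantor's intersection theorem, and then $\Omega_{w^\alpha}(\mathcal A)\le\Omega_{w^\alpha}(\varphi_t(B))\to0$ by monotonicity of the measure and the contraction hypothesis. There are two minor differences: you prove invariance and attraction yourself (the paper cites \cite{Temam}), and you apply Definition~\ref{Def:def3} directly to the compact set $B$, which its wording allows, whereas the paper passes through a relatively compact open $G\supset B$ and uses $\Omega_{w^\alpha}(\varphi_t(B))\le\Omega_{w^\alpha}(\varphi_t(G))$.
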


\begin{proof}
Consider the family of sets $\{\varphi_t(B)\}_{t\ge0}$. 
By positive invariance of $B$ ($\varphi_t(B) \subseteq B$ for all $t\ge0$) and the semigroup property of the forward flow $\varphi_{t+s}=\varphi_t\circ\varphi_s$, this family is decreasing: for any $t_2\ge t_1\ge0$ we have $\varphi_{t_2}(B) = \varphi_{t_1}(\varphi_{t_2-t_1}(B)) \subseteq \varphi_{t_1}(B)$. 

Define $\mathcal{A} = \bigcap_{t\ge0} \varphi_t(B)$.
As an intersection of nested nonempty compact sets, $\mathcal{A}$ is nonempty and compact. Continuity of the finite-dimensional flow, compactness and the absorbing property imply that $\mathcal{A}$ is invariant and attracts every bounded set, hence is a global attractor (see \cite{Temam}).

By construction $\mathcal{A}\subseteq \varphi_t(B)$ for any $t\ge0$. Monotonicity of the integral with respect to the positive measure $w^\alpha\,dV$ gives
$\Omega_{w^\alpha}(\mathcal{A}) \le \Omega_{w^\alpha}(\varphi_t(B))$ for all $t\ge0$.
Choose a relatively compact open domain $G$ containing the compact set $B$. Asymptotic $w^\alpha$-contractivity gives $\Omega_{w^\alpha}(\varphi_t(G))\to0$, and monotonicity of the measure yields $0\le\Omega_{w^\alpha}(\varphi_t(B))\le\Omega_{w^\alpha}(\varphi_t(G))\to0$.
Passing to the limit $t\to\infty$ in the inequality, we get
$\Omega_{w^\alpha}(\mathcal{A}) \le \lim_{t\to\infty} \Omega_{w^\alpha}(\varphi_t(B)) = 0$.

Since $\Omega_{w^\alpha}(\mathcal{A})$ is an integral of a positive function, it is nonnegative. Hence $\Omega_{w^\alpha}(\mathcal{A}) = 0$. 
Theorem \ref{th:wattractor_alpha} is proved.
\end{proof}

\begin{example}
For system \eqref{ex:sys_first} on $M=\{r>1/4\}$ with $w=e^{3r^2}/r^3$ and $\alpha=1$, the compact annulus $B=\{1/2\le r\le 2\}$ is positively invariant and absorbs every bounded subset of $M$. 
The attractor $\{r=1\}$ has zero $w$-volume. For other $\alpha$, if contractivity is preserved, the attractor will also have zero $w^\alpha$-volume.
\end{example}

We now consider a generalisation to nonautonomous systems.

\begin{theorem}\label{th:wnonaut_alpha}
Consider a $T$-periodic nonautonomous system $\dot x=f(t,x)$, locally Lipschitz in $x$, whose time-$T$ map $P=\varphi_{0,T}$ is a $C^1$ diffeomorphism onto its image. Assume that $P$ admits a compact positively invariant set $B$ absorbing every bounded subset of $M$. Suppose that for some $\kappa\in(0,1)$,
\[
\Omega_{w^\alpha}(P(D))\le\kappa\,\Omega_{w^\alpha}(D)
\]
for every relatively compact measurable domain $D\Subset M$. Then the global attractor $\mathcal A=\bigcap_{k\ge0}P^k(B)$ has zero $w^\alpha$-volume.
\end{theorem}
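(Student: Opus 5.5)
The argument mirrors the proof of Theorem~\ref{th:wattractor_alpha}, with the continuous flow replaced by the discrete semigroup $\{P^k\}_{k\ge0}$ and the asymptotic contractivity replaced by the explicit geometric factor $\kappa$.

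\emph{Step 1: the nested compact sets.} Positive invariance gives $P(B)\subseteq B$. Applying $P^k$ yields $P^{k+1}(B)\subseteq P^k(B)$, so the sets $P^k(B)$ decrease in $k$. Each $P^k(B)$ is a continuous image of a compact set and is therefore compact and nonempty. Hence $\mathcal A=\bigcap_{k\ge0}P^k(B)$ is nonempty and compact.

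\emph{Step 2: $\mathcal A$ is the global attractor.} This step is not really needed for the volume claim. We take it from the standard discrete-semigroup theory, as \cite{Temam} was cited in the continuous case: a continuous map with a compact absorbing set has $\omega(B)=\bigcap_k P^k(B)$ as its invariant, attracting set.

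\emph{Step 3: a dominating open set.} Compactness of $B$ and local compactness of $M$ give a relatively compact open set $G\supseteq B$. Then $P^k(B)\subseteq P^k(G)$ for every $k$.

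\emph{Step 4: iterating the contraction.} For this we need each $P^j(G)$ to be an admissible domain, i.e.\ measurable and relatively compact. Measurability holds because $P$ is a $C^1$ diffeomorphism onto its image, so it maps open sets to open sets. Relative compactness holds because $P^j(\overline G)$ is compact and contains $P^j(G)$. Applying the hypothesis successively to $G, P(G), \dots$ gives
\[
\Omega_{w^\alpha}(P^k(G))\le\kappa^k\,\Omega_{w^\alpha}(G).
\]
Here $\Omega_{w^\alpha}(G)<\infty$, since $w^\alpha$ is continuous and positive on the compact set $\overline G$.

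\emph{Step 5: conclusion.} Monotonicity of the measure $w^\alpha\,dV$ gives
\[
0\le\Omega_{w^\alpha}(\mathcal A)\le\Omega_{w^\alpha}(P^k(B))\le\kappa^k\,\Omega_{w^\alpha}(G)\to0 .
\]
Therefore $\Omega_{w^\alpha}(\mathcal A)=0$.

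The main point requiring care is Step 4: it must be justified that the iterates $P^j(G)$ remain in the class of domains to which the hypothesis applies. The diffeomorphism assumption on $P$ is exactly what supplies this.
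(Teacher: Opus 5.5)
Your proof is correct and follows the paper's argument essentially step by step: you use the nested compact images $P^k(B)$, a relatively compact open $G\supset B$, iterate the $\kappa$-contraction on $P^k(G)$, and conclude by monotonicity of the measure $w^\alpha\,dV$. Your Step 4 check that the iterates $P^j(G)$ stay measurable and relatively compact, so that the hypothesis can be reapplied, makes explicit a detail the paper covers only with the phrase ``iteration of the assumed inequality.''
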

\begin{proof}
The sets $P^k(B)$ are nested nonempty compact sets. Their intersection $\mathcal A$ is nonempty, compact and invariant. Compactness and the absorbing property imply that it attracts every bounded set. Choose a relatively compact open domain $G\supset B$. Iteration of the assumed inequality gives
\[
\Omega_{w^\alpha}(P^k(G))\le\kappa^k\Omega_{w^\alpha}(G) \to 0.
\]
Since $\mathcal A\subset P^k(B)\subset P^k(G)$ for every $k$, positivity of the measure gives $\Omega_{w^\alpha}(\mathcal A)=0$.
\end{proof}

\begin{example}
Consider the $2$-periodic polar system
\[
\dot r=a(t)r-r^3,\qquad \dot\theta=1,
\]
where $a(t)=\alpha_1>0$ for $t\in[0,1)$ and $a(t)=\alpha_2>0$ for $t\in[1,2)$, extended periodically. The solutions are understood in the Carath\'eodory sense at switching times. Put
\begin{align*}
A&=\alpha_1+\alpha_2>0,\\
C&=2\int_0^2\exp\!\left(2\int_0^s a(u)\,du\right)ds>0.
\end{align*}
Solving the radial equation gives the time-$2$ map
\begin{align*}
P(r,\theta)=(R(r),\theta+2),~~~
R(r)=\frac{r e^A}{\sqrt{1+Cr^2}}.
\end{align*}
The unique positive fixed radius is
$\rho_*^2=(e^{2A}-1)/C$, with $R'(\rho_*)=e^{-2A}<1$.
Choose a sufficiently small $\varepsilon>0$ satisfying
$\varepsilon<\min\{\rho_*/2,\sqrt{\alpha_1},\sqrt{\alpha_2}\}$ and work on the forward-invariant phase space $M=\{r>\varepsilon\}$. The compact annulus
$B=\{\rho_*/2\le r\le3\rho_*/2\}$ is positively invariant for $P$ and absorbs every bounded subset of $M$ under iteration.

For $w(r)=r^{-3}$ and $\alpha=1$, the weighted polar area element is $r^{-2}dr\,d\theta$, and the ratio of weighted image area to weighted initial area at radius $r$ is
\[
g(r)=\frac{r^2 R'(r)}{R(r)^2}
=\frac{e^{-A}}{\sqrt{1+Cr^2}}\le e^{-A}<1.
\]
The change-of-variables formula therefore gives
$\Omega_w(P(D))\le e^{-A}\Omega_w(D)$ for every measurable $D\Subset M$, and, in particular, for $D=B$. The invariant core $\bigcap_{k\ge0}P^k(B)=\{r=\rho_*\}$ is a zero-$w$-area global attractor for the discrete-time system on $M$.

For the inverse weight $w^{-1}=r^3$, the corresponding ratio is
$R(r)^4R'(r)/r^4=e^{5A}(1+Cr^2)^{-7/2}$, which exceeds $1$ near $r=0$ (and for points of $M$ sufficiently close to $\varepsilon$ if $\varepsilon$ is chosen small enough). Thus the same global contraction bound does not hold for this inverse measure. The invariant circle does not become a repeller: only its measured volume behaviour changes.
\end{example}

\begin{remark}
Classical Lyapunov stability \cite{Lyapunov} studies the behavior of individual trajectories and relies on the existence of a Lyapunov function decreasing along solutions. In contrast, the approach developed in this paper introduces an integral criterion in which the weighted volume $\Omega_{w^\alpha}(D)$ serves as a new Lyapunov functional whose monotonicity is determined by the sign of $D_{w^\alpha}f$. This controls weighted volumes of transported domains and provides explicit exponential integral estimates, not automatic pointwise stability. The flexibility in choosing the parameter $\alpha$ and the weight $w$ makes the method applicable to systems for which the construction of a classical pointwise Lyapunov function is difficult, and allows one to detect dissipative properties hidden from standard analysis.
\end{remark}

\section{Stability with respect to a submanifold with weight $w^\alpha$}
\label{sec:wsubmanifold_alpha}

In many problems of stability theory, the attractor is not an isolated equilibrium point but an invariant submanifold of positive codimension, for example, a limit cycle or an invariant torus.  
To quantify the distribution of transported volume relative to such a set, it is convenient to track a distance-weighted ambient-volume moment, whose integrand vanishes on the submanifold and is positive away from it.  
Geometrically, this construction weights ambient volume according to distance from the submanifold. It need not equal the Jacobian of the normal variational flow.  
Taking the distance-weighted ambient element $\rho_N^m\,dV$ (where $\rho_N$ is the distance to the submanifold $N$ in the power of the codimension $m$) and multiplying it by the weight function $w^\alpha$, we obtain the object $\mathcal{V}_{N,w^\alpha}(D)$, which is sensitive both to the proximity to $N$ and to the choice of the conformal measure.  
The evolution of this moment is described by a distance-weighted divergence identity. Suitable choices of $\alpha$ and $w$ can yield exponential decay of the moment for transported sets in a positively invariant tubular neighbourhood, even when the ordinary divergence gives no such estimate. This integral conclusion does not by itself imply pointwise attraction to the submanifold.  
In this section we extend the weighted-volume estimates to distance-weighted moments near invariant submanifolds of arbitrary codimension.

Let $N\subset M$ be a smooth compact embedded submanifold without boundary of codimension $m\ge1$, and let $U$ be a sufficiently small tubular neighbourhood on which $\rho_N(x)=\operatorname{dist}_g(x,N)$ is smooth off $N$. For $\alpha\in\mathbb{R}$ and a positive function $w$, define, for $D\Subset U$,
\begin{equation}
\label{eq:VNw_alpha}
\mathcal{V}_{N,w^\alpha}(D)=\int_D \rho_N(x)^m\, w(x)^\alpha\, dV(x).
\end{equation}

We now modify Definitions~\ref{Def:def1}--\ref{Def:def3} for the case of stability with respect to a submanifold with weight $w^\alpha$.

\begin{definition}
The system is $N$-$w^\alpha$-volumetrically dissipative if there exists a positively invariant tubular neighbourhood $U\supset N$ such that for every measurable $D\Subset U$ and all $t\ge0$, we have $\mathcal{V}_{N,w^\alpha}(\varphi_t(D))\le \mathcal{V}_{N,w^\alpha}(D)$.
\end{definition}

\begin{definition}
The system is asymptotically $N$-$w^\alpha$-contracting if for every such $D\Subset U$ one has $\lim_{t\to\infty}\mathcal{V}_{N,w^\alpha}(\varphi_t(D))=0$.
\end{definition}

We now state the condition for exponential $N$-$w^\alpha$-contraction.

\begin{theorem}
\label{th:wNmain_alpha}
Assume that $N$ is invariant and that there exists a positively invariant tubular neighbourhood $U\supset N$ and a constant $\lambda>0$ such that for $x\in U\setminus N$ the transverse dissipativity condition
\begin{equation}
\label{eq:wcond_alpha}
D_{w^\alpha}f(x) + m\,\frac{f(x)^{\top}\nabla\rho_N(x)}{\rho_N(x)} \le -\lambda
\end{equation}
holds. Then for every measurable initial domain $D\Subset U$,
\begin{equation}
\label{eq:est_exp_N}
\mathcal{V}_{N,w^\alpha}(\varphi_t(D)) \le e^{-\lambda t}\,\mathcal{V}_{N,w^\alpha}(D), ~~ t\ge0.
\end{equation}
\end{theorem}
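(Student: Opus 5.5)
The plan is to reduce the statement to Theorem~\ref{th:wrate_alpha_comp} by absorbing the distance factor into the weight. On $U\setminus N$ the function $\tilde w:=\rho_N^{m}w^\alpha$ is positive and $C^1$. With exponent $1$ this weight gives
\[
D_{\tilde w}f=\diverg f+\mathcal L_f\ln(\rho_N^m w^\alpha)=D_{w^\alpha}f+m\,\frac{f^\top\nabla\rho_N}{\rho_N}.
\]
Hence hypothesis \eqref{eq:wcond_alpha} says exactly that $D_{\tilde w}f\le-\lambda$ on $U\setminus N$. Moreover $\mathcal V_{N,w^\alpha}(D)=\Omega_{\tilde w}(D\setminus N)$, because $N$ has codimension $m\ge1$ and therefore zero volume, so removing it does not change any integral.

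The transport identity \eqref{eq:deriv_alpha} is still needed for $\tilde w$, and this is where care is required, since $\tilde w$ degenerates on $N$ and $\rho_N$ is not smooth there. I would proceed as follows.

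\begin{itemize}
\item[(1)] Since $N$ is invariant and the flow consists of diffeomorphisms, $\varphi_t(N)=N$. Consequently $\varphi_t(D\setminus N)=\varphi_t(D)\setminus N$. Also $U\setminus N$ is positively invariant: a point of $U\setminus N$ cannot enter $N$, by uniqueness of solutions and invariance of $N$.
\item[(2)] Compute the derivative directly by change of variables:
\[
V(t)=\int_D \rho_N(\varphi_t y)^m w(\varphi_t y)^\alpha J_t(y)\,dV(y),
\]
where $J_t$ is the Jacobian, with $\partial_t J_t=(\diverg f\circ\varphi_t)J_t$. For $y\notin N$, differentiating the integrand in $t$ gives $(D_{\tilde w}f)(\varphi_ty)$ times the integrand. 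On $N$ the integrand vanishes identically, and $N$ is a null set anyway.
\item[(3)] Justify differentiation under the integral, or work with the integrated form to avoid it. Both rest on the same observation. The quantity $\rho_N^{m-1}|f^\top\nabla\rho_N|$ is bounded on compact flow tubes: $|\nabla\rho_N|=1$, and for $m=1$ this is just $|f|$, while for $m>1$ it vanishes near $N$. Since $f$ is tangent to $N$ and vanishes in the normal direction on $N$, $f^\top\nabla\rho_N=O(\rho_N)$, so the full derivative of the integrand is in fact bounded. Dominated convergence then gives $\dot V(t)=\int_{D(t)\setminus N}D_{\tilde w}f\,\tilde w\,dV$.
\end{itemize}

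With this identity in hand, the argument of Theorem~\ref{th:wrate_alpha_comp} applies verbatim. Step~(1) gives $D(t)\setminus N\subset U\setminus N$. The pointwise bound $D_{\tilde w}f\,\tilde w\le-\lambda\tilde w$ integrates to $\dot V\le-\lambda V$. Gronwall's inequality, via the integrating factor $e^{\lambda t}$, then yields \eqref{eq:est_exp_N}.

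The main obstacle I expect is step~(3): regularity of the integrand up to $N$ and the resulting justification of the transport formula, since $\rho_N$ is only Lipschitz across $N$ when $m=1$. I would first try the pointwise-in-$y$ argument. For each $y\notin N$, the map $t\mapsto \tilde w(\varphi_ty)J_t(y)$ is $C^1$ and satisfies
\[
\frac{d}{dt}\bigl(\tilde w(\varphi_ty)J_t(y)\bigr)\le-\lambda\,\tilde w(\varphi_ty)J_t(y).
\]
Gronwall's inequality then gives, pointwise, $\tilde w(\varphi_ty)J_t(y)\le e^{-\lambda t}\tilde w(y)$. Integrating over $D\setminus N$ gives the claim without differentiating under the integral at all. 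This route sidesteps the issue completely and is the one I would actually write.
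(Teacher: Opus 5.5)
Your proposal is correct. Its first half, steps (1)--(3), is essentially the paper's argument. The paper computes $\mathcal L_f(\rho_N^m w^\alpha)+\rho_N^m w^\alpha\diverg f=\rho_N^m w^\alpha\bigl(D_{w^\alpha}f+m\,f^\top\nabla\rho_N/\rho_N\bigr)$, which is exactly your $D_{\tilde w}f\,\tilde w$. It inserts this into the transport formula for $V(t)$, which it justifies only by ``differentiating off $N$ and using dominated convergence on compact flow tubes''; your bound on $\rho_N^{m-1}|f^\top\nabla\rho_N|$ is the estimate that sentence leaves implicit. It then discards the null set $N$ and applies the integrating factor to $V$.

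The route you say you would actually write differs in where Gronwall is applied. You apply it pointwise, to the Lagrangian density $\tilde w(\varphi_t y)J_t(y)$ for each $y\in D\setminus N$, and integrate only at the end. This avoids differentiating under the integral and any treatment of the Lipschitz singularity of $\rho_N$ on $N$. The price is that you genuinely need step (1): orbits starting off $N$ never reach $N$, which follows from invariance in the sense $\varphi_t(N)=N$ and injectivity of $\varphi_t$. The paper's proof, by contrast, invokes only that $N$ is a null set.

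In return you get a slightly stronger conclusion. The pointwise bound $\tilde w(\varphi_t y)J_t(y)\le e^{-\lambda t}\tilde w(y)$ gives the estimate for every measurable $D\subset U$, not only $D\Subset U$, with no a priori finiteness of $\mathcal V_{N,w^\alpha}(D)$ needed. The paper's version, for its part, keeps the proof parallel to those of Theorems~\ref{th:wrate_alpha_comp} and~\ref{th:main_orient}.
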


\begin{proof}
Define the function of time
\begin{equation*}
V(t): = \mathcal{V}_{N,w^\alpha}(D(t)) = \int_{D(t)} \rho_N(x)^m \, w(x)^\alpha \, dV(x).
\end{equation*}
The distance $\rho_N$ is smooth on $U\setminus N$ and locally Lipschitz on $U$. Since $m\ge1$, $N$ has zero ambient Riemannian volume. The transport identity for the locally Lipschitz weight $\rho_N^m w^\alpha$ thus holds by differentiating off $N$ and using dominated convergence on compact flow tubes. Applying the transport formula for the moving domain $D(t) = \varphi_t(D)$, we obtain
\begin{equation}
\label{eq:proof_main_rho}
\begin{array}{lll}
\frac{dV}{dt} = \int_{D(t)} \bigl( \mathcal{L}_f (\rho_N^m w^\alpha) + (\rho_N^m w^\alpha) \diverg f \bigr) dV.
\end{array}
\end{equation}
We compute the Lie derivative of the integrand in \eqref{eq:proof_main_rho}:
\begin{equation*}
\begin{array}{lll}
\mathcal{L}_f (\rho_N^m) = m \rho_N^{m-1} \mathcal{L}_f (\rho_N) = m \rho_N^{m-1} f^{\top} \nabla\rho_N,
\\
\mathcal{L}_f (w^\alpha) = \alpha w^{\alpha-1} \mathcal{L}_f (w) = \alpha w^\alpha f^{\top} \nabla\ln w.
\end{array}
\end{equation*}
Hence,
$\mathcal{L}_f (\rho_N^m w^\alpha) = \rho_N^m \,\alpha w^\alpha f^{\top} \nabla\ln w + m \rho_N^{m-1} w^\alpha f^{\top} \nabla\rho_N$.
Adding the term $(\rho_N^m w^\alpha) \diverg f$, we obtain the total rate of change of the integral:
$
\frac{dV}{dt} = \int_{D(t)} \rho_N^m w^\alpha \Bigl( \diverg f + \alpha f^{\top} \nabla\ln w + m \frac{f^{\top} \nabla\rho_N}{\rho_N} \Bigr) dV$.

Using the definition \eqref{eq:Dwalpha} of the weighted divergence of order $\alpha$, we get
\begin{equation*}
\frac{dV}{dt} = \int_{D(t)} \rho_N^m w^\alpha \Bigl( D_{w^\alpha}f + m \frac{f^{\top} \nabla\rho_N}{\rho_N} \Bigr) dV.
\end{equation*}

By the hypothesis of the theorem, in the positively invariant tubular neighbourhood $U$, inequality \eqref{eq:wcond_alpha} holds for $x \in U \setminus N$.
Since $D \subset U$ and $U$ is positively invariant, $D(t) \subset U$ for all $t \ge 0$. For almost every point of $D(t)$ (except possibly a set of measure zero where $\rho_N=0$), the estimate holds. Substituting it under the integral sign, we have
\begin{equation*}
\frac{dV}{dt} \le -\lambda \int_{D(t)} \rho_N^m w^\alpha \, dV = -\lambda V(t).
\end{equation*}

Thus $V(t)$ satisfies the differential inequality $\dot V(t) + \lambda V(t) \le 0$. Multiplying by the integrating factor $e^{\lambda t}$, we get
$\frac{d}{dt}\bigl( V(t) e^{\lambda t} \bigr) \le 0$.
Integrating from $0$ to $t$, we find $V(t) e^{\lambda t} \le V(0)$, or equivalently,
$\mathcal{V}_{N,w^\alpha}(\varphi_t(D)) = V(t) \le e^{-\lambda t} V(0) = e^{-\lambda t} \, \mathcal{V}_{N,w^\alpha}(D)$.
Theorem \ref{th:wNmain_alpha} is proved.
\end{proof}

\begin{example}\label{ex:Nw_cycle_corrected}
On $M=\mathbb R^2\setminus\{0\}$ consider
\[
\dot r=r(1-r),\qquad \dot\theta=4+3\sin\theta.
\]
The circle $N=\{r=1\}$ is an attracting limit cycle of codimension $m=1$. In the polar coordinate basis the correct Euclidean divergence is
\[
\operatorname{div}f=\frac1r\partial_r(r\dot r)+\partial_\theta\dot\theta
=2-3r+3\cos\theta.
\]
Choose the positive angular weight $w(\theta)=(4+3\sin\theta)^{-1}$. Since
$\mathcal L_f\ln w=-3\cos\theta$, the weighted divergence is
\[
D_{w^\alpha}f=2-3r+3(1-\alpha)\cos\theta.
\]
For $\rho_N=|r-1|$ and $r\ne1$ one has
$\mathcal L_f\rho_N/\rho_N=-r$. Hence at $\alpha=1$,
\[
D_wf+\frac{\mathcal L_f\rho_N}{\rho_N}=2-4r.
\]
The open annulus $U=\{0.85<r<1.15\}$ is a positively invariant tubular neighbourhood of $N$, and $2-4r\le-1.4$ on $U$. Theorem~\ref{th:wNmain_alpha} therefore gives the explicit distance-weighted ambient-volume estimate
\[
\mathcal V_{N,w}(\varphi_t(D))\le e^{-1.4t}\mathcal V_{N,w}(D),
\qquad D\Subset U.
\]
The actual pointwise radial attraction is independently apparent from $\dot r=r(1-r)$.
\end{example}

\section{Differential-geometric interpretation and conformal equivalence}
\label{sec:wforms}

For any $\alpha\in\mathbb{R}$, consider the conformal metric
\begin{equation}
\label{eq:conf_metric_alpha}
\tilde g_\alpha = w^{2\alpha/n} g,
\end{equation}
where $g$ denotes the original Riemannian metric on $M$.
Then $\tilde g_\alpha = e^{2\alpha\phi}g$, with $\phi=\frac{1}{n}\ln w$. 
The volume form is
\begin{equation}
\label{eq:form_metric}
\begin{array}{lll}
\omega_{\tilde g_\alpha} & =  \sqrt{\det \tilde g_\alpha}\, dx^1\wedge\dots\wedge dx^n 
\\
& = (w^{2\alpha/n})^{n/2} \sqrt{\det g}\, dx^1\wedge\dots\wedge dx^n = w^\alpha \omega_g.
\end{array}
\end{equation}
Thus, the weighted volume form in $g$ is precisely the ordinary volume form in $\tilde g_\alpha$. The divergence of $f$ in the metric $\tilde g_\alpha$ is given by
\begin{equation}
\label{eq:div_new_metric}
\begin{array}{lll}
\widetilde{\diverg}_\alpha f & = \diverg_g f + \alpha n f^{\top} \nabla \phi  
\\
& = \diverg_g f + \alpha f^{\top} \nabla \ln w = D_{w^\alpha}f,
\end{array}
\end{equation}
where $\diverg_g f$ measures the rate of change of the Riemannian volume (in the metric $g$) under the flow of $f$.
Thus $D_{w^\alpha}f$ is the ordinary divergence in the metric $\tilde g_\alpha$. The family of metrics $\{\tilde g_\alpha\}_{\alpha\in\mathbb{R}}$ is generated by the conformal rescalings $T_\alpha(g)=w^{2\alpha/n}g$, with $T_\alpha\circ T_\beta=T_{\alpha+\beta}$ for the fixed function $w$. The sign of $\alpha$ does not by itself determine contraction or expansion.

We now state the geometric equivalence theorem for any $\alpha$.

\begin{theorem}
\label{th:geom_equiv_alpha}
The following statements hold:
\begin{enumerate}
    \item[(a)] $\Omega_{w^\alpha}(D) = \operatorname{vol}_{\tilde g_\alpha}(D)$.
    \item[(b)] The system is $w^\alpha$-volumetrically conservative in $g$ $\Leftrightarrow$ $\widetilde{\diverg}_\alpha f\equiv0$.
    \item[(c)] The system is $w^\alpha$-volumetrically dissipative in $g$ $\Leftrightarrow$ $\widetilde{\diverg}_\alpha f\le 0$.
    \item[(d)] Asymptotic $w^\alpha$-contractivity in $g$ is equivalent to asymptotic volumetric contractivity in $\tilde g_\alpha$.
\end{enumerate}
\end{theorem}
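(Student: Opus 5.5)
The plan is to reduce everything to two identities already computed in this section: the volume-form identity \eqref{eq:form_metric}, $\omega_{\tilde g_\alpha}=w^\alpha\omega_g$, and the divergence identity \eqref{eq:div_new_metric}, $\widetilde{\diverg}_\alpha f=D_{w^\alpha}f$.

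For (a), we integrate \eqref{eq:form_metric} over a measurable $D\Subset M$. This gives
\[
\vol_{\tilde g_\alpha}(D)=\int_D\omega_{\tilde g_\alpha}=\int_D w^\alpha\,\omega_g=\Omega_{w^\alpha}(D).
\]
Because $w>0$ is continuous, the two measures are mutually absolutely continuous, so measurability and relative compactness of $D$ are the same notion in $g$ and in $\tilde g_\alpha$. To verify \eqref{eq:div_new_metric} we use the coordinate formula
\[
\diverg_{\tilde g}f=\frac{1}{\sqrt{\det\tilde g}}\,\partial_i\bigl(\sqrt{\det\tilde g}\,f^i\bigr),\qquad \sqrt{\det\tilde g}=w^\alpha\sqrt{\det g}.
\]
The product rule then gives $\diverg_g f+\alpha\,\mathcal L_f\ln w$. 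Since $w\in C^1$, this is continuous, so it is enough to have $\tilde g_\alpha$ of class $C^1$ for the divergence to be defined.

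For (b) and (c), we rewrite the transport formula \eqref{eq:deriv_alpha} via (a) as
\[
\frac{d}{dt}\vol_{\tilde g_\alpha}(\varphi_t(D))=\int_{\varphi_t(D)}\widetilde{\diverg}_\alpha f\,d\vol_{\tilde g_\alpha}.
\]
The direction "$\Leftarrow$" is immediate. If $\widetilde{\diverg}_\alpha f\equiv0$, respectively $\le0$, the derivative is $0$, respectively $\le0$, for every $t$, so $\Omega_{w^\alpha}(\varphi_t(D))$ is constant, respectively nonincreasing, as required by Definitions~\ref{Def:def1}--\ref{Def:def2}. For "$\Rightarrow$", suppose $\widetilde{\diverg}_\alpha f(x_0)>0$ at some point $x_0$. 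By continuity it stays positive on a small geodesic ball $B\Subset M$ around $x_0$. Taking $D=B$, the derivative at $t=0$ is strictly positive, so $\Omega_{w^\alpha}(\varphi_t(B))>\Omega_{w^\alpha}(B)$ for small $t>0$, contradicting dissipativity. Replacing "$>0$" by "$\ne0$" gives the conservative case (b).

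For (d), we use (a) directly. The equality $\Omega_{w^\alpha}(\varphi_t(D))=\vol_{\tilde g_\alpha}(\varphi_t(D))$ holds for all $t$ and all admissible $D$. The flow $\varphi_t$ depends only on $f$, not on the metric. The class of relatively compact measurable sets is the same for both metrics. Hence one limit is zero exactly when the other is.

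The only step needing care is the justification of \eqref{eq:deriv_alpha} at $t=0$ used in "$\Rightarrow$". It requires differentiation under the integral on the compact flow tube over $\overline B$ for small $t$, which the paper already assumes. One point should be stated explicitly: the equivalences in (b) and (c) rely on continuity of $D_{w^\alpha}f$, which follows from $w\in C^1$ and smooth $f$.
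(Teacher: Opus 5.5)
Your proposal is correct and follows essentially the same route as the paper: you integrate $\omega_{\tilde g_\alpha}=w^\alpha\omega_g$ for (a), identify $\widetilde{\diverg}_\alpha f=D_{w^\alpha}f$, read (b)--(c) off the transport formula, and obtain (d) directly from (a). Your small-ball localization for the ``$\Rightarrow$'' directions makes explicit what the paper compresses into ``since $w^\alpha>0$ and $D$ is arbitrary,'' and using (a) directly for (d) validly skips the paper's redundant same-derivative comparison.
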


\begin{proof}
Consider the conformal metric \eqref{eq:conf_metric_alpha}. 
Its volume form is \eqref{eq:form_metric}.
Hence, for any measurable domain $D$,
$\operatorname{vol}_{\tilde g_\alpha}(D) = \int_D \omega_{\tilde g_\alpha} = \int_D w^\alpha \omega_g = \Omega_{w^\alpha}(D)$,
which proves (a).

The divergence of $f$ in $\tilde g_\alpha$ is expressed via the divergence in $g$ and the conformal factor as in \eqref{eq:div_new_metric}.
From \eqref{eq:Dwalpha} we have $\widetilde{\diverg}_\alpha f = D_{w^\alpha}f$.

Using \eqref{eq:deriv_alpha}, the rate of change of the weighted volume is
$\frac{d}{dt}\Omega_{w^\alpha}(D(t)) = \int_{D(t)} D_{w^\alpha}f \; w^\alpha\, dV_g$.
But $w^\alpha\, dV_g = \omega_{\tilde g_\alpha}$ and $D_{w^\alpha}f = \widetilde{\diverg}_\alpha f$, so
$\frac{d}{dt}\Omega_{w^\alpha}(D(t)) = \int_{D(t)} \widetilde{\diverg}_\alpha f \; \omega_{\tilde g_\alpha}$.
Exactly the same formula holds for the ordinary Riemannian volume in $\tilde g_\alpha$:
\begin{equation*}
\frac{d}{dt}\operatorname{vol}_{\tilde g_\alpha}(D(t)) = \int_{D(t)} \widetilde{\diverg}_\alpha f \; \omega_{\tilde g_\alpha} .
\end{equation*}
Thus the functions $\Omega_{w^\alpha}(D(t))$ and $\operatorname{vol}_{\tilde g_\alpha}(D(t))$ have the same derivative and coincide at $t=0$. Hence they are equal for all $t\ge0$.

We now prove the equivalences in (b)--(d).

The system is $w^\alpha$-volumetrically conservative in $g$ iff for every $D\Subset M$, $\frac{d}{dt}\Omega_{w^\alpha}(D(t))\equiv0$. Since $w^\alpha>0$ and $D$ is arbitrary, this is equivalent to $D_{w^\alpha}f\equiv0$. As $D_{w^\alpha}f = \widetilde{\diverg}_\alpha f$, this is equivalent to $\widetilde{\diverg}_\alpha f\equiv0$, i.e. volumetric conservativity in $\tilde g_\alpha$.

Similarly, $w^\alpha$-volumetric dissipativity means $\Omega_{w^\alpha}(\varphi_t(D))\le \Omega_{w^\alpha}(D)$ for all $D$ and $t\ge0$. In the smooth case this is equivalent to $D_{w^\alpha}f\le0$, which, since $D_{w^\alpha}f = \widetilde{\diverg}_\alpha f$, becomes $\widetilde{\diverg}_\alpha f\le0$, exactly the dissipativity condition in $\tilde g_\alpha$.

Asymptotic $w^\alpha$-contractivity in $g$ means that the weighted volume of $\varphi_t(D)$ tends to zero for every $D\Subset M$. Since weighted volume in $g$ equals ordinary volume in $\tilde g_\alpha$ at each time, the same convergence holds in the conformal metric. The equivalence is established.
Theorem \ref{th:geom_equiv_alpha} is proved.
\end{proof}

\begin{remark}
\label{sec:family} 
Consider the family of $w^\alpha$-volumes and their duality.
The introduced family $\Omega_{w^\alpha}$ for different $\alpha$ possesses a certain symmetry. From \eqref{eq:Dwalpha} we have
\begin{equation}
\label{eq:duality}
D_{w^\alpha}f + D_{w^{-\alpha}}f = 2\diverg f .
\end{equation}
In particular, if $\diverg f \equiv 0$ (the system preserves ordinary volume), then $D_{w^{-\alpha}}f = - D_{w^\alpha}f$. 
If ordinary divergence vanishes and one weighted divergence is uniformly negative on a common invariant region, the opposite weight has uniformly positive divergence there.
More generally, for any $\alpha,\beta\in\mathbb{R}$, $\beta\neq0$, we have
\begin{equation*}
D_{w^\alpha}f = \frac{\alpha}{\beta} D_{w^\beta}f + \left(1-\frac{\alpha}{\beta}\right)\diverg f,
\end{equation*}
which expresses an affine dependence on $\alpha$ (interpolation only when $\alpha/\beta\in[0,1]$). This identity permits checking the desired sign for different $\alpha$. The existence of a suitable exponent requires the relevant pointwise inequalities to have a common solution.
\end{remark}

\begin{remark}
The identity for inverse weights must not be interpreted as a change in dynamical attraction or repulsion.
Changing $\alpha$ changes the reference measure but does not change the vector field, invariant sets, or their dynamical attraction and repulsion. Equation~\eqref{eq:duality} only relates the corresponding local weighted divergences. Without information about $\operatorname{div}f$, it does not imply opposite volume behaviour.
\end{remark}

\begin{example}
Return to the system with a limit cycle \eqref{ex:sys_first} and the weight $w=e^{16(r-1)^2}$. For the given neighbourhood $U_{1/2}=\{|r-1| \leq \frac{1}{2}\}$, the expression $D_{w^\alpha}f = -1-3u - 32\alpha(1+u)u^2$ (where $u=r-1$ and $u \in [-0.5,0.5]$) shows that increasing $\alpha$ lowers the weighted divergence at every $r\ne1$, whereas decreasing it raises the weighted divergence there. These statements concern the chosen weighted-volume estimate and do not alter dynamical stability.
\end{example}

\begin{remark}
In some thermostatted systems \cite{Nose,Hoover,EvansMorriss}, invariant or stationary densities can arise under additional assumptions. 
The proposed criteria expressed in terms of $D_{w^\alpha}f$ provide a rigorous tool for their stability analysis, and different exponents define different reference measures, but need not correspond to physical invariant ensembles.
\end{remark}

\section{Oriented volumetric stability with respect to an invariant hypersurface}
\label{sec:orient}

In the previous sections, the positive weight function $w^\alpha$ and the parameter $\alpha$ were chosen freely in order to ``tune'' the measure with respect to which the contraction of the phase flow is studied.  
Now we consider the case where the geometry of the problem itself gives rise to a natural sign-alternating weight function.  
Let $\Sigma=\{\psi=0\}$ be a smooth hypersurface invariant under the flow of $f$. Then on $\Sigma$ we have $f^\top\nabla\psi=0$, and the scalar product $w(x)=f(x)^\top\nabla\psi(x)$ vanishes on $\Sigma$ and, as a rule, changes sign when crossing it, indicating the direction of the phase flow relative to the hypersurface.  
The oriented volume $V_\psi(D)=\int_D w(x)\,dV(x)$ is a signed volume integral of the normal defining-function derivative (not itself a surface-flux integral) and is a natural analogue of the weighted volume, in which the conformal weight is replaced by a function generated by the dynamics itself.  
Unlike the arbitrary choice of $w$ and $\alpha$, here the weight is fixed by the invariant hypersurface, and the parameter $\alpha$ is not introduced. The evolution of the oriented volume is described by the divergence of the modified field $wf$, and under additional conditions the nonnegative functional $\int_D|w|\,dV$ decreases and $|w|$ also bounds the trajectory-wise distance to $\Sigma$.  
This approach combines a pointwise Lyapunov-type estimate for $|w|$ with an integral volume estimate near an invariant hypersurface.

Consider a smooth system on an open set $O\subseteq\mathbb R^n$ and a smooth compact hypersurface $\Sigma = \{x\in O : \psi(x)=0\}$, where $\psi$ is smooth on $O$, $\nabla\psi\neq0$ on $\Sigma$, and $\Sigma$ is invariant under the flow: $f^{\top}\nabla\psi=0$ on $\Sigma$. In this section the weight function may change sign and need not be a power of a positive function, so the parameter $\alpha$ is not used.

Define the sign-alternating weight function
\begin{equation*}
w(x) := f(x)^{\top} \nabla\psi(x).
\end{equation*}
On $\Sigma$ we have $w=0$. Introduce the oriented $\psi$-volume of a domain $D$:
\begin{equation}\label{eq:Vpsi}
V_\psi(D) = \int_D w(x)\,dV(x) = \int_D f(x)^{\top} \nabla\psi(x) \,dV(x) .
\end{equation}

We now state the conditions for oriented volumetric stability.

\begin{theorem}\label{th:main_orient}
Let $O\subseteq\mathbb R^n$ be open and let $f\in C^1(O;\mathbb R^n)$ generate a forward-complete flow on $O$. Let $\psi\in C^2(O)$ and let $\Sigma=\{x\in O:\psi(x)=0\}$ be a smooth compact invariant hypersurface, with $\nabla\psi\neq0$ on $\Sigma$. Set $w=\mathcal L_f\psi$. Suppose there is a tubular neighbourhood $U\Subset O$ of $\Sigma$ and constants $c_1,c_2,\lambda,\mu>0$, with $\mu<\lambda$, such that
\begin{enumerate}
\item[(a)] $w^{-1}(0)\cap U=\Sigma$, $\nabla w$ is transverse to $\Sigma$, and
$c_1\operatorname{dist}(x,\Sigma)\le|w(x)|\le c_2\operatorname{dist}(x,\Sigma)$ on $U$;
\item[(b)] $\operatorname{sgn}(w)\operatorname{div}(wf)\le-\lambda|w|$ on $U\setminus\Sigma$;
\item[(c)] $\lambda+\operatorname{div}f\ge\mu$ on $U$.
\end{enumerate}
Then $\Sigma$ is locally exponentially asymptotically stable. More precisely, there exists a positively invariant neighbourhood $U_0\Subset U$ of $\Sigma$ such that every $x_0\in U_0$ satisfies
\[
\operatorname{dist}(\varphi_t(x_0),\Sigma)
\le \frac{c_2}{c_1}e^{-\mu t}\operatorname{dist}(x_0,\Sigma).
\]
Moreover, for every measurable $D\Subset U_0$,
\[
\int_{\varphi_t(D)}|w|\,dV
\le e^{-\lambda t}\int_D|w|\,dV,\qquad t\ge0.
\]
\end{theorem}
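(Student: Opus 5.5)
The plan has two parts: a pointwise estimate for $|w|$ along trajectories, and a transport estimate for the integral of $|w|$.

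\textbf{Pointwise estimate.} The key identity is
\[
\operatorname{div}(wf)=\mathcal L_f w+w\operatorname{div}f .
\]
On $U\setminus\Sigma$, multiplying by $\operatorname{sgn}(w)$ and using (b) gives
\[
\operatorname{sgn}(w)\mathcal L_f w\le -\lambda|w|-|w|\operatorname{div}f=-(\lambda+\operatorname{div}f)|w|\le-\mu|w|,
\]
where the last step uses (c). By (a), $w\neq0$ off $\Sigma$ in $U$, so the sign of $w$ is locally constant along any trajectory segment staying in $U\setminus\Sigma$. Hence along such segments
\[
\frac{d}{dt}|w(\varphi_t(x_0))|\le-\mu|w(\varphi_t(x_0))|,
\]
and Gr\"onwall yields $|w(\varphi_t(x_0))|\le e^{-\mu t}|w(x_0)|$ as long as the trajectory stays in $U$.

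Invariance of $\Sigma$ and uniqueness of solutions (since $f\in C^1$) prevent trajectories starting off $\Sigma$ from reaching $\Sigma$ in finite time. So the sign of $w$ really is constant along each trajectory in $U\setminus\Sigma$. Trajectories starting on $\Sigma$ stay there, and the estimate is trivial for them.

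\textbf{Construction of $U_0$.} This is the main obstacle: we must ensure trajectories do not leave $U$. Compactness of $\Sigma$ gives some $\delta>0$ with $\{\operatorname{dist}(x,\Sigma)<\delta\}\Subset U$. I would take the sublevel set
\[
U_0=\{x\in U:|w(x)|<c_1\delta\}\cap(\text{the component containing }\Sigma).
\]
By (a), every point of $U_0$ has distance to $\Sigma$ less than $\delta$, and $U_0$ contains the $c_1\delta/c_2$-neighbourhood of $\Sigma$, so it is a genuine neighbourhood. Because $|w|$ is nonincreasing along trajectories while they remain in $U$, a trajectory starting in $U_0$ cannot reach the boundary of $U_0$: its $|w|$ stays below $c_1\delta$ and its distance to $\Sigma$ stays below $\delta$. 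A continuity (first exit time) argument then gives positive invariance of $U_0$.

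\textbf{Distance estimate.} Applying the two sides of (a),
\[
c_1\operatorname{dist}(\varphi_t x_0,\Sigma)\le|w(\varphi_t x_0)|\le e^{-\mu t}|w(x_0)|\le c_2e^{-\mu t}\operatorname{dist}(x_0,\Sigma),
\]
which is the claimed bound with constant $c_2/c_1$.

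\textbf{Integral estimate.} For $D\Subset U_0$ set $I(t)=\int_{\varphi_t(D)}|w|\,dV$. Split $D$ into $D_+=D\cap\{w>0\}$, $D_-=D\cap\{w<0\}$ and the null set $D\cap\Sigma$; each part keeps its sign under the flow by the argument above. On $D_\pm$ the weight $\pm w$ is $C^1$, so the transport formula, i.e.\ the same change-of-variables computation underlying \eqref{eq:deriv_alpha} and used in Theorem~\ref{th:wNmain_alpha}, gives
\[
\frac{d}{dt}\int_{\varphi_t(D_\pm)}(\pm w)\,dV=\int_{\varphi_t(D_\pm)}\pm\bigl(\mathcal L_f w+w\operatorname{div}f\bigr)\,dV=\int_{\varphi_t(D_\pm)}\operatorname{sgn}(w)\operatorname{div}(wf)\,dV .
\]
By (b) the right-hand side is at most $-\lambda\int_{\varphi_t(D_\pm)}|w|\,dV$. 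Summing the two parts gives $\dot I\le-\lambda I$, and the integrating-factor argument of Theorem~\ref{th:wrate_alpha_comp} yields $I(t)\le e^{-\lambda t}I(0)$.

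The hypotheses that $\nabla w$ is transverse to $\Sigma$ and that $w^{-1}(0)\cap U=\Sigma$ serve only to guarantee the sign structure used above. The remaining technical point is justifying that $\Sigma$ has measure zero, which holds because it is a smooth hypersurface.
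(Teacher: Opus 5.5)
Your proof is correct and follows essentially the same route as the paper. In both, the identity $\operatorname{div}(wf)=\mathcal L_f w+w\operatorname{div}f$ together with (b),(c) gives $\frac{d}{dt}|w|\le-\mu|w|$ along trajectories. A sublevel set of $|w|$ inside a tube about $\Sigma$ then yields the positively invariant $U_0$ via a first-exit argument, (a) converts the decay of $|w|$ into the distance bound, and the transport formula plus Gr\"onwall gives the integral bound. The one technical difference is that you split $D$ into $D_\pm$ and apply the transport formula to the $C^1$ weights $\pm w$, using sign preservation along trajectories; the paper instead applies it directly to the Lipschitz weight $|w|$ and uses that $\Sigma$ is a null set.
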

\begin{proof}
For a trajectory in $U$, let $v(t)=|w(\varphi_t(x_0))|$. On $U\setminus\Sigma$, the identity
$\mathcal L_fw=\operatorname{div}(wf)-w\operatorname{div}f$
and assumptions (b),(c) give
\[
\dot v\le-\bigl(\lambda+\operatorname{div}f\bigr)v\le-\mu v.
\]
On $\Sigma$, $w=0$ and invariance ensures that $v$ remains zero, so the same inequality holds along all trajectories in $U$. Since $\Sigma$ is compact, choose $h>0$ so that the closed $h$-tube about $\Sigma$ is contained in $U$. Fix $0<\delta<c_1h$ and put
$U_0:=\{x\in O:\dist(x,\Sigma)<h,\ |w(x)|<\delta\}$.
By (a), its closure lies in $U$, because $|w|\le\delta$ implies $\dist(x,\Sigma)\le\delta/c_1<h$ within the $h$-tube. The inequality $v(t)\le e^{-\mu t}v(0)$ holds up to any possible first exit from $U_0$. Such an exit cannot occur through $|w|=\delta$ (since $v$ decreases) or through $\dist(x,\Sigma)=h$ (where (a) gives $|w|\ge c_1h>\delta$). Thus $U_0$ is positively invariant, and the two bounds in (a) imply the distance estimate.

For $D\Subset U_0$, put $W(t)=\int_{\varphi_t(D)}|w|\,dV$.
The transport formula applies to the locally Lipschitz weight $|w|$. Since $w^{-1}(0)\cap U=\Sigma$ has zero ambient volume,
\begin{align*}
\dot W(t)
=\int_{\varphi_t(D)}\operatorname{sgn}(w)
     \operatorname{div}(wf)\,dV
\le-\lambda W(t).
\end{align*}
Gronwall's inequality completes the proof.
\end{proof}

\begin{example}
Consider system \eqref{ex:sys_first} on the open set $O=\mathbb R^2\setminus\{0\}$.
The hypersurface $\Sigma=\{r=1\}$ is invariant. Setting $\psi(x,y)=r-1$, we find $w = f^{\top}\nabla\psi = r(1-r)$. Condition (a) of Theorem \ref{th:main_orient} is satisfied. Compute $wf = (r^2(1-r)^2, r(1-r))^{\top}$. In polar coordinates,
$\diverg(wf) = \frac{1}{r}\frac{d}{dr} r^3(1-r)^2 = r(1-r)(3-5r)$.
Then $\sgn(w)\diverg(wf) = (3-5r)|w|$. 
In the open tubular neighbourhood $U=\{0.9<r<1.1\}$ we have $3-5r \le -1.5$, so condition (b) of Theorem \ref{th:main_orient} holds with $\lambda=1.5$. Furthermore, the minimum of $\diverg f = 2-3r$ on the interval is at $r=1.1$, giving $-1.3$. 
Choose $\mu=0.2$, then $\lambda+\diverg f \ge 0.2 >0$. By the theorem, the distance to $\Sigma$ decays at least as $e^{-0.2t}$ for initial points in a sufficiently small $U_0\Subset U$, and the nonnegative integral of $|w|$ decays at rate $1.5$ for domains $D\Subset U_0$. This does not assert monotonicity of the signed integral $V_\psi(D)$.
\end{example}

\section{Conclusion}

In this paper, we have introduced and studied a one-parameter family of weighted phase volumes $\Omega_{w^\alpha}$ with weight $w^\alpha$, $\alpha\in\mathbb{R}$. This family includes the ordinary phase volume at $\alpha=0$ and the mutually inverse weights at $\alpha=\pm1$, without changing the underlying dynamical system. Sufficient conditions for weighted-volume contraction and expansion and for excluding invariant Jordan sets of positive measure have been proved for any fixed $\alpha$. The zero-volume attractor result additionally assumes a compact absorbing set. A geometric interpretation has been given, showing that $D_{w^\alpha}f$ is the divergence in the conformal metric $\tilde g_\alpha = w^{2\alpha/n}g$. The conformal rescalings form a one-parameter group for fixed $w$, and the identity $D_{w^\alpha}f+D_{w^{-\alpha}}f=2\operatorname{div}f$ relates the two weighted divergences without implying a change in the attractor--repeller structure. The signed-weight argument additionally combines a pointwise Lyapunov-type estimate with a volume estimate near an invariant hypersurface.

\bibliographystyle{unsrtnat}

\bibliography{weighted_volume_refs_corrected_v3}

\end{document}